\newtheorem{thm}{Theorem}
\newtheorem{lemma}[thm]{Lemma}
\newtheorem{cor}[thm]{Corollary}
\newtheorem{prop}[thm]{Proposition}
\theoremstyle{remark} 
\newtheorem{remark}[]{Remark}
\newcommand{\ii}{{\rm i}}
\newcommand{\ee}{{\rm e}}
\newcommand{\p}{\partial}
\newcommand{\RR}{\mathbb{R}}
\newcommand{\EE}{\mathbb{E}}
\newcommand{\CC}{\mathbb{C}}
\newcommand{\ol}{\overline}
\newcommand{\re}{\text{Re}}
\newcommand{\im}{\text{Im}}
\newcommand{\Ind}{\text{Ind}}
\begin{document}
\author[S-Y. Lee, A. Lerario, E. Lundberg]{Seung-Yeop Lee, Antonio Lerario, and Erik Lundberg}
\title{Remarks on Wilmshurst's theorem}

\begin{abstract}
We demonstrate counterexamples to Wilmshurst's conjecture on the valence of harmonic polynomials in the plane,
and we conjecture a bound that is linear in the analytic degree for each fixed anti-analytic degree.
Then we initiate a discussion of Wilmshurt's theorem in more than two dimensions,
showing that if the zero set of a polynomial harmonic field is bounded then it must have codimension at least two.
Examples are provided to show that this conclusion cannot be improved.

\end{abstract}

\maketitle

\section{Introduction}

Suppose $F: \RR^d \rightarrow \RR^d$ is a vector field with polynomial components, each of degree $n$.
For a generic choice of $F$, intersection theory (Bezout's theorem) implies:
\begin{equation}\label{eq:Bezout}
N_F \leq n^d,
\end{equation}
where $N_F$ is the number of zeros of $F$, points where $F$ vanishes.
This bound is sharp in general, but it is natural to ask:

\noindent {\bf Question:} For interesting special classes of $F$, can (\ref{eq:Bezout}) be improved?

\noindent {\bf Example 1:} 
If $F: \RR^d \rightarrow \RR^d$ preserves orientation, and has no singular zeros, then letting $\deg F$ denote the topological degree, for a generic choice of $F$, 
 $$N_F = \deg F.$$
 For $d=2$ and $F(z)$ an analytic polynomial of $z \in \CC \cong \RR^2$, 
 $$N_F = \deg F = n.$$ 
 This is the fundamental theorem of algebra.
 \begin{remark}
 If $F$ is coercive, its topological degree coincides with the degree of its extension to the one point compactification $S^d=\RR\cup\{\infty\}$. 
 In this case let $y\in \RR^d$ be a regular value for $F$ (Sard's lemma implies the generic $y$ is a regular value); then $\deg F=\sum_{x\in F^{-1}(y)}\textrm{sign}( JF_x)$ (coercivity implies this sum is finite). 
 \end{remark}
 \noindent {\bf Example 2:} Suppose $F(x) = F_+(x) + F_-(x)$ can be decomposed into an orientation preserving part $F_+$ of (topological) degree $n$, and an orientation reversing part $F_-$ of degree $m < n$.
  Can the bound (\ref{eq:Bezout}) be improved?  What if the components of $F$ are assumed to be harmonic polynomials?

While considering special classes of polynomial vector fields, another question is whether the word ``generic'' can be removed
for $F$ within that class.
A. S. Wilmshurst \cite{Wilm94, Wilm98} considered the case of harmonic fields.
Using complex variable notation $z \in \CC$, a harmonic field can be expressed as a sum of analytic and anti-analytic parts:
$$F(z) = p(z) + \ol{q(z)}$$
(this is an instance of a decomposition $F(x) = F_+(x) + F_-(x)$ as described above).
In these terms, Wilmshurst showed:
\begin{thm}[A. S. Wilmshurst, 1994]
If $\deg p = n > m = \deg q$, then $N_F \leq n^2$.
\end{thm}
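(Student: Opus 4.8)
The plan is to reduce the statement to a B\'ezout bound for two algebraic curves of degree $n$. First I would complexify the equation $F = 0$ by treating $z$ and $\bar z$ as independent variables. Writing $p(z) = \sum a_j z^j$ and $q(z) = \sum b_j z^j$, the identity $\ol{q(z)} = \ol{q}(\bar z)$ (with $\ol{q}(\zeta) := \sum \ol{b_j}\zeta^j$) lets me introduce a second complex variable $w$ playing the role of $\bar z$ and form
\begin{equation*}
\tilde F(z,w) = p(z) + \ol{q}(w).
\end{equation*}
A zero $z_0$ of $F$ yields a point $(z_0, \ol{z_0})$ at which $\tilde F$ vanishes; taking complex conjugates of $F(z_0) = 0$ shows that the companion polynomial $\tilde G(z,w) = q(z) + \ol{p}(w)$ also vanishes there. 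Thus the map $z \mapsto (z, \bar z)$ embeds the real zero set of $F$ into the complex variety
\begin{equation*}
V = \{(z,w) \in \CC^2 : \tilde F(z,w) = 0, \ \tilde G(z,w) = 0\},
\end{equation*}
and it suffices to bound $\# V$.

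Next I would apply B\'ezout's theorem. Since $\deg p = n > m = \deg q$, each of $\tilde F$ and $\tilde G$ has total degree exactly $n$ in $(z,w)$. Passing to $\CP^2$ by homogenizing, B\'ezout gives $\#V \le n^2$ provided the two curves share no common component; the finitely many points at infinity only decrease the affine count, so the bound $N_F \le n^2$ follows once the no-common-component condition is established.

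The crux — and the only place where the hypothesis $m < n$ enters — is showing that $\tilde F$ and $\tilde G$ have no common factor in $\CC[z,w]$. Here I would exploit the multiplicativity of the top-degree homogeneous part: for nonzero $f,g$ one has $\mathrm{LF}(fg) = \mathrm{LF}(f)\,\mathrm{LF}(g)$, so any common divisor $h$ of $\tilde F$ and $\tilde G$ must have $\mathrm{LF}(h)$ dividing both leading forms. Because $m < n$, the degree-$n$ part of $\tilde F$ comes solely from $p$, giving $\mathrm{LF}(\tilde F) = a_n z^n$, while the degree-$n$ part of $\tilde G$ comes solely from $\ol{p}$, giving $\mathrm{LF}(\tilde G) = \ol{a_n}\,w^n$. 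These are coprime (one is a power of $z$, the other a power of $w$), so $\mathrm{LF}(h)$ is constant and hence $h$ is constant. This rules out a common component.

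The step I expect to be the main obstacle is guaranteeing that $V$ is genuinely finite, i.e.\ that the zero set of $F$ is not one-dimensional; without this the B\'ezout count is vacuous. This is exactly what the leading-form computation secures, so the conceptual weight of the proof rests there rather than in the intersection count itself. A secondary technical point is the bookkeeping at infinity: one must check that the line at infinity is not a shared component (it is not, since neither leading form vanishes identically) and that the finite complex intersection number, taken with multiplicity, genuinely dominates the real count $N_F$.
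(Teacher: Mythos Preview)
Your argument is correct and is essentially the classical route to the result, close to Wilmshurst's original proof. The paper, however, deliberately takes a different path. Rather than complexifying $\bar z$ to an independent variable $w$ and applying B\'ezout to the pair $\tilde F,\tilde G\in\CC[z,w]$, the paper works over $\RR^d$ and uses harmonicity directly. The hypothesis $m<n$ enters only to note that the leading homogeneous part of the real field $(\re F,\im F)$ is a nonzero multiple of $(\re z^n,\im z^n)$, which does not vanish on $S^1$; this forces the zero set $\{F=0\}$ to be bounded. The core step (the paper's Theorem~\ref{thm:gen}) then shows that any bounded real algebraic set defined by harmonic polynomials has codimension at least two: were there a piece of dimension $d-1$, Alexander duality would produce a bounded component of the complement, on whose closure some harmonic component $h_i$ would attain an interior maximum, contradicting the maximum principle. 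For $d=2$ ``codimension two'' means finite, and B\'ezout then supplies the $n^2$ bound. Your approach is more elementary and self-contained, settling both finiteness and the count in one stroke via the coprime leading forms $a_n z^n$ and $\bar a_n w^n$; the paper's approach costs more machinery (semialgebraic triangulations, Alexander duality) but buys a generalization to harmonic fields on $\RR^d$ for every $d$, where it yields a codimension-two conclusion that the complexification trick does not reach.
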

In other words, for this class of vector fields $F$, Bezout's bound $N_F \leq n^2$ applies generally, not just generically.
This was independently shown in \cite{PSch1996}.

As to the Question of improving (\ref{eq:Bezout}) given additional information, 
Wilmshurst made the tantalizing conjecture that
\begin{equation}\label{eq:Wilmshurst}
N_F \leq 3n-2 + m(m-1).
\end{equation}
This conjecture is stated in \cite[Remark 2]{Wilm98}, and was discussed further in \cite{Sh2002}.
It is also mentioned in the list of open problems in \cite{BL2010}.
For $m=n-1$ the upper bound follows from Wilmshurst's theorem,
and examples were also given in \cite{Wilm98} showing that this bound is sharp (shown independently in \cite{BHS1995}).
For $m=1$, the upper bound was proved by D. Khavinson and G. Swiatek \cite{K-S} (a subsequent extension to rational functions \cite{K-N, K-N2} settled a conjecture in gravitational lensing),
and a proof of the Crofoot-Sarason conjecture given in \cite{G2008} (cf. \cite{BL2004}) established that this bound is sharp.
For $m=n-3$, the conjectured bound is
$$3n-2 + m(m-1) = n^2 - 4n + 10.$$
We provide counterexamples for which
$$N_F > n^2 - 3 n + \mathcal{O}(1).$$
We state the counterexamples in Section \ref{sec:counter} and prove this estimate on the number of zeros.

In Section \ref{sec:gen}, we give an alternative proof of 
Wilmshurst's theorem that relies more heavily on real algebraic geometry
and readily generalizes to harmonic vector fields in higher dimensions but with a weaker conclusion:
the zero set has codimension at least two (for $d=2$ this implies the number of zeros is finite).
As we show by example, this cannot be improved without adding additional assumptions on $F$.
Even though the number of zeros may not be finite, 
the number of connected components is, 
and the bound due to J. Milnor \cite{Milnor} can be applied to estimate the number of connected components (see Section \ref{sec:gen}).

It would be interesting to carry out a random study for $d>2$ similar to what was done in $d=2$ dimensions \cite{LiWei2009}.
For any reasonable distribution of probability on the space of harmonic polynomials, the number of zeros
$N_F$ is finite with probability $1$.  This is because a generic $F$ does satisfy the Bezout bound (see Section \ref{sec:gen}, Proposition \ref{prop:generic}).
Thus, it makes sense to ask what is the expected number of zeros of a random harmonic polynomial field $F$.
To state a concrete problem, take a basis $\{Y_{k,i}(x)\}_{i \in I_k}$ for homogeneous harmonics of degree $k$ 
(which can be expressed explicitly using Gegenbauer polynomials),
where the size of the index set $I_k$ depends on $k$ and $d$.
Let $F$ be a random vector field with $j$th component:
\begin{equation}\label{eq:random}
F_j(x) = \sum_{k=0}^{n}\sum_{i \in I_k} \xi_{k,i,j} \cdot Y_{k,i}(x).
\end{equation}
{\bf Problem:} Calculate or provide large $n$ asymptotics for the expectation $ \EE N_F .$

Note that the inside sum of (\ref{eq:random}) within a single component of $F$ is, 
when restricted to the unit sphere, 
a random spherical harmonic with the same distribution of probability that was studied in
\cite{NazSod2009, LL2012}.

Returning to the deterministic setting, 
the spirit of Wilmshurst's conjecture --- that the maximum number of zeros is linear in $n$ for each fixed $m$ --- still seems plausible.
We do not venture a conjecture as to the exact maximum (and perhaps it is not described by a simple formula),
but we are willing to make the following conjecture.

\noindent {\bf Conjecture:} Let $N_-$ denote the number of orientation reversing zeros of $F(z) = p(z) + \ol{q(z)}$,
$$ N_- \leq m(n-1). $$

In the appendix, we review basic degree theory (the generalized argument principle for harmonic functions)
showing that the Conjecture implies that $N_F \leq 2m(n-1) + n$ when $F$ is free of singular zeros.
Note that this is worse than the Bezout bound for large $m$, but is linear in $n$ for fixed $m$ and reduces to $3n-2$ when $m=1$.

It would also be interesting to further extend the study for $d>2$, and to consider non-harmonic polynomials as well.
We conclude the introduction by giving an illustrative case motivated by \cite{K-S}.
Suppose $F(x) = F_+(x) + A x$, where $F_+$ is orientation preserving and $A$ is an orientation-reversing, non-singular matrix.
If $F_+$ has topological degree $\deg F_+ = n_T$.
Then we can show that
\begin{equation}\label{eq:KS}
N_F \leq n_T^2.
\end{equation}
This follows from one iteration of the fixed-point equation
$$ A^{-1} F_+(x) = x,$$
which implies
$$ A^{-1} F_+(A^{-1} F_+(x) ) = x \implies A^{-1} F_+(A^{-1} F_+(x) ) - x =0. $$
The latter is orientation preserving and has topological degree $n_T^2$.

For $d=2$ and $F$ a complex-analytic function of $z \in \CC$, the Khavinson-Swiatek theorem gives the improved bound 
$$ N_F \leq 3 n - 2.$$

This raises the question whether it is possible to improve (\ref{eq:KS}) to a linear bound when $d>2$ (and perhaps in some cases when $F$ is not harmonic).

\section{Counterexamples to Wilmshurst's conjecture}\label{sec:counter}
 
 \begin{thm}\label{thm:counter}
For $m=n-3$ and $n\geq 4$, there exists polynomials $p(z)$ and $q(z)$ of degrees $n$ and $m$ respectively, such that the number of roots to the equation, $\overline{p(z)}=q(z)$, is at least
\begin{equation}\label{maineq}
n^2-4n+4\,\bigg\lfloor\frac{n-2}{\pi}\arctan \frac{\sqrt{n^2-2n}}{n}\bigg\rfloor+2.
\end{equation}
\end{thm}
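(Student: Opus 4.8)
The plan is to recast the complex equation $\ol{p(z)}=q(z)$ --- equivalently the vanishing of the harmonic field $F = p - \ol{q}$ --- as an intersection problem for two real plane curves, and then to exhibit an explicit pair $(p,q)$ with $\deg p = n$, $\deg q = n-3$ for which that intersection comes within $\mathcal{O}(n)$ of the Bezout ceiling $n^2$. Writing $z=x+\ii y$ and $F = u + \ii v$ with $u=\re F$, $v = \im F$ real polynomials of degree $n$, the roots to be counted are the points of $\{u=0\}\cap\{v=0\}$. Since $\deg p = n > \deg q$, the top-order part of $F$ is $z^n$, so on any large circle $u$ and $v$ behave like $r^n\cos(n\phi+\cdots)$ and $r^n\sin(n\phi+\cdots)$; each curve therefore has exactly $2n$ unbounded branches, asymptotic to two interleaved families of equally spaced rays $\phi\equiv \tfrac{k\pi}{n}$ and $\phi\equiv \tfrac{\pi}{2n}+\tfrac{k\pi}{n}$. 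This rigid ``arrangement at infinity'' is what should force the $n^2-4n$ part of the count.

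Second, I would tune $p$ and $q$ so that, besides these $2n$ escaping branches, one of the curves --- say $\{u=0\}$ --- carries a bounded oval sitting off the origin, produced by the low-order terms (this is where the freedom in the degree-$(n-3)$ tail is spent). The count then splits: the outer, near-asymptotic crossings contribute a fixed number fixed purely by the combinatorics of the two ray families, while each branch of $\{v=0\}$ that actually meets the oval contributes a pair of extra transversal intersections. Counting the genuine extra crossings thus becomes an angular problem: how many of the usable equally spaced directions of $\{v=0\}$ point into the window subtended by the oval. A ray through the region meets a circle of radius $\rho$ centred at distance $D$ from the origin precisely when its direction lies within an arc of half-angle $\arcsin(\rho/D)$, i.e. an $\arctan$ of the corresponding ratio; matching $\rho,D$ to the scales dictated by $p,q$ turns this window into $\arctan\frac{\sqrt{n^2-2n}}{n}$. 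Dividing the window by the angular spacing of the $n-2$ directions that remain after the two special axial ones are treated separately, and taking the integer part, should yield exactly $4\lfloor \frac{n-2}{\pi}\arctan\frac{\sqrt{n^2-2n}}{n}\rfloor$, the factor $4$ recording (heuristically) that each admissible direction is met twice and occurs in two symmetric sectors, with the $+2$ coming from the axial roots.

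Third, to make this rigorous I would realize the construction as a controlled perturbation of an exactly solvable model and certify the crossings one at a time. On the outer region a Rouch\'e / winding argument shows the asymptotic ray picture is faithful, so the forced crossings are all present and simple; on the bounded oval an intermediate-value (sign-change) argument along $\{u=0\}$, or equivalently a transversality check $\nabla u\times\nabla v\neq 0$ at each putative crossing, promotes each geometric near-intersection to a genuine simple real root and shows no two collide. Summing the outer count, the oval count, and the axial roots then gives the lower bound (\ref{maineq}); note it exceeds Wilmshurst's conjectured $n^2-4n+10$ once $n$ is large, which is the whole point.

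The hard part will be the middle step: proving that the oval meets the prescribed number of branches \emph{transversally}, that exactly $\lfloor\frac{n-2}{\pi}\arctan\frac{\sqrt{n^2-2n}}{n}\rfloor$ directions fall inside the window with none lost to tangency at its boundary, and that the perturbation neither creates nor destroys crossings. Equivalently, the delicate region is the transition zone between the oval and the escaping branches, where the leading $z^n$ no longer dominates the degree-$(n-3)$ tail; keeping $\deg q = n-3$ exactly while still realizing the optimal window is the structural constraint that makes the bookkeeping tight, and it is there that the precise value of the $\arctan$ must be \emph{derived} rather than merely estimated.
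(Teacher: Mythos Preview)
Your overall architecture --- reduce to a real intersection of two degree-$n$ curves with interlaced asymptotic directions, then engineer extra crossings via the low-order tail --- is the right instinct, and it is essentially what the paper does. But the specific mechanism you propose cannot work as stated.

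The fatal point is your second step: you want $\{u=0\}$ to carry a ``bounded oval sitting off the origin''. Here $u=\re F$ with $F=p-\ol{q}$, so $u$ is a nonconstant \emph{harmonic} polynomial. The zero set of such a function has no bounded component: if $u$ vanished on a Jordan curve bounding a region, the maximum principle would force $u\equiv 0$ there and hence everywhere. So the oval you need simply does not exist, and the $\arcsin(\rho/D)$ window computation, which is your derivation of the $\arctan$ term, has nothing to stand on. The same obstruction applies to $\{v=0\}$. This is not a perturbation issue or a bookkeeping issue; it is a structural impossibility in the class of curves you are intersecting.

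The paper circumvents this in a way worth internalizing. It does \emph{not} intersect $\{\re F=0\}$ with $\{\im F=0\}$. Instead it introduces an auxiliary $f(z)=(z-a)^{n-2}P(z)$ with $P$ quadratic chosen so that $f(z)=z^n+\mathcal{O}(z^{n-3})$, and rewrites the target equation as the intersection of $\Gamma=\{\im f=0\}$ with $\Gamma_0=\{\re z^n=0\}$; the identities $p=z^n+f$, $q=-z^n+f$ then have the required degrees. Both curves are still zero sets of harmonic functions, so neither has ovals --- but $\Gamma$ has something just as good: a critical point of order $n-2$ at $z=a$, through which $n-2$ of its $n$ branches pass. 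The role your oval was meant to play (a localized feature that many $\Gamma_0$-branches must cross) is played by this star of $2(n-2)$ rays emanating from $a$. The $\arctan\frac{\sqrt{n^2-2n}}{n}$ is then not an angular width seen from the origin, but the argument of $z_--a$ where $z_\pm$ are the two roots of $P$; it governs how many of the rays thread between $z_+$ and $z_-$, via the variation of $\arg f$ along the segment $[z_+,z_-]$. The final count sums, ray by ray, the forced crossings with $\Gamma_0$ determined by the asymptotic direction of each ray.

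So: keep your step one and your asymptotic counting intuition, but replace the impossible oval by a high-order critical point of an auxiliary harmonic level set, and derive the $\arctan$ from $\Delta\arg f$ rather than from a subtended angle.
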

\begin{remark}
As $n$ goes to $\infty$, the above expression is $n^2-3n + {\mathcal{O}}(1)$ and, therefore, 
provides infinitely many counterexamples to Wilmshurst's conjecture, with a discrepency that grows linearly in $n$.
\end{remark}
\begin{remark}
We note that \eqref{maineq} is {\em not} the maximal number of roots.  
In fact, the explicit examples used in the proof of the theorem have more roots than stated in \eqref{maineq}.  
However, we leave the theorem with the modest estimate since our main interest is to provide counterexamples, 
and the proof becomes technical with only a slight improvement.  
We also do not know whether our examples attain the maximal number of roots. 
\end{remark}
  
The explicit construction of counterexamples will be done similarly as the construction of the examples for $m=n-1$ by Wilmshurst [Wilm95].

Let us consider the polynomial of degree $n$ given by
\begin{equation}
	f(z)=(z-a)^{n-2} P(z),\quad P(z)=z^2+(n-2)\,a\,z + \frac{(n-2)(n-1)}{2}a^2.
\end{equation}
The quadratic factor is chosen such that $f(z)=z^n + {\mathcal{O}}(z^{n-3})$ as $z\to\infty$.

We define the level lines of ${\rm Im}\,f$ by
\begin{equation}
	\Gamma=\{z\,|\,{\rm Im}f(z)=0 \}.
\end{equation}
At the intersection of $\Gamma$ with the set $\{z\,|\,{\rm Re}\, z^n=0\}$, the following equation is satisfied.
\begin{equation}\label{master}
z^n+\overline{z^n}=f(z) - \overline{f(z)} \quad \Longrightarrow\quad -z^n + f(z) = \overline{z^n}+\overline{f(z)}.    
\end{equation} 
The left hand side is a holomorphic polynomial of degree $n-3$, and the right hand side is a polynomial of degree $n$.  Therefore the intersection points are the roots of the equation $\overline{p(z)}=q(z)$ where $q(z)=-z^n+f(z)$ and $p(z)=z^n+f(z)$.

The following lemma shows properties of $\Gamma$ that are useful in counting the intersection points. 
\begin{lemma}\label{lem:level} The level set $\Gamma$ satisfies the following:
\begin{itemize}
\item $\Gamma$ consists of $n$ smooth (algebraic) curves that starts from $\infty \times \ee^{\ii\pi j /n}$ for some $j\in\{0,1,\cdots,2n-1\}$ and ends at $\infty \times \ee^{\ii\pi k /n}$ for some $k\in\{0,1,\cdots,2n-1\}$ that is not $j$. 
We call each of these a ``line''.
\item For each $n$ there exists an $a$ such that the only intersections of the above curves are at $a$ where $n-2$ lines intersect.  
\end{itemize}
\end{lemma}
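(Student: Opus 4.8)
The plan is to read off the geometry of $\Gamma=\{\im f=0\}$ from the critical points of $f$ together with its behaviour at infinity, and then to choose $a$ so that the only critical point lying on $\Gamma$ is $z=a$. Since $\im f$ is a real polynomial in $(x,y)$ of degree $n$, the set $\Gamma$ is a real algebraic curve; writing $\nabla(\im f)=(\im f',\re f')$ via the Cauchy--Riemann equations, a point of $\Gamma$ is a singular point precisely when $f'$ vanishes there. I would first compute and factor $f'$. A direct calculation gives $f'(z)=n(z-a)^{n-3}\bigl(z^2+(n-3)az+\tfrac{(n-2)(n-3)}{2}a^2\bigr)$, so the critical points are $z=a$ (of multiplicity $n-3$) and the two roots $z_\pm$ of the quadratic factor; for $a\neq 0$ one checks $z_\pm\neq a$, and that $f$ vanishes at $a$ to order exactly $n-2$.

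Next I would analyze the ends. Because the quadratic factor was arranged so that $f(z)=z^n+\mathcal{O}(z^{n-3})$, on a circle $|z|=R$ one has $\im f(R\ee^{\ii\theta})=R^n\sin(n\theta)+\mathcal{O}(R^{n-3})$, whose $\theta$-derivative is dominated by the leading term near each zero $\theta_j=\pi j/n$. Hence for all large $R$ the curve $\Gamma$ meets $|z|=R$ in exactly $2n$ points, one clustering near each of the $2n$ distinct directions $\ee^{\ii\pi j/n}$, $j=0,\dots,2n-1$; thus $\Gamma$ has exactly $2n$ ends, asymptotic to these rays. The maximum principle removes every bounded feature: if $\im f$ vanished on the boundary of a bounded region it would vanish identically there, forcing $f$ constant. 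This rules out ovals and bounded complementary regions, so no arc of $\Gamma$ is a closed loop and, in particular, there is no arc running from $a$ back to $a$.

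The local picture at $a$ comes from the factorization $f(z)=w(z)^{n-2}$ with $w$ a local biholomorphism vanishing at $a$: the level set $\{\im w^{n-2}=0\}$ is $n-2$ straight lines through the origin, so near $a$ the curve $\Gamma$ is $n-2$ smooth arcs crossing pairwise at $a$. To make $a$ the \emph{only} singular point I must keep $z_\pm$ off $\Gamma$. Evaluating $f$ at $z_\pm=a\zeta_\pm$, where $\zeta_\pm$ depend only on $n$, collapses to $f(z_\pm)=a^n C_n^\pm$ with $C_n^\pm$ depending only on $n$; I then pick $a$ with $\im(a^n C_n^\pm)\neq 0$ (a real $a$ suffices whenever $\im C_n^\pm\neq 0$, and in every case all but finitely many arguments of $a$ work). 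With this choice $a$ is the unique singular point of $\Gamma$.

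Finally I would assemble the global picture inside a large disk $D_R$ containing all critical points. There $\Gamma\cap D_R$ is a finite embedded graph whose only interior vertex is $a$, of degree $2(n-2)$, together with $2n$ degree-one vertices on $\partial D_R$. Since no arc is a loop at $a$ and none is closed, counting edge-ends gives $2(n-2)$ arcs joining $a$ to the boundary and exactly $2$ arcs joining the boundary to itself. Pairing opposite branches at $a$ (which is smooth by the $w^{n-2}$ model) turns the first family into $n-2$ smooth lines through $a$, and the second family is $2$ smooth lines avoiding $a$, for a total of $n$ lines; letting $R\to\infty$ sends their ends to the directions $\ee^{\ii\pi j/n}$, with the two ends of each line at distinct directions, since each large circle meets $\Gamma$ in exactly one point near each $\theta_j$. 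This yields both bullets. I expect the main obstacle to be this last assembly together with the choice of $a$: verifying that the maximum principle truly excludes every loop at $a$, that the edge count forces exactly $n$ lines, and that $f(z_\pm)$ collapses to $a^n$ times a constant so that $a$ can be chosen to keep the remaining critical points off $\Gamma$.
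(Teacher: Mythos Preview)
Your proof is correct and, for the first bullet, considerably more detailed than the paper's, which simply says the asymptotic behaviour ``follows from the leading order term for $f(z)$'' and does not spell out the maximum-principle and graph-counting argument that pins down exactly $n$ lines. The genuine difference is in the choice of $a$. You exploit the homogeneity of $f$ in $(z,a)$ to write $f(z_\pm)=a^{n}C_n^\pm$ and observe that all but finitely many arguments of $a$ keep $z_\pm$ off $\Gamma$ (you should remark that $C_n^\pm\neq0$, since $z_\pm$ are simple critical points and hence not zeros of $f$, but this is immediate). The paper instead fixes $a$ real, computes
\[
\arg f(\zeta_+)=(n-3)\arctan\frac{\sqrt{(n-3)(n-1)}}{-(n-1)}
\]
explicitly, reduces the question of whether $f(\zeta_+)$ can be real to whether $\cos\frac{2\pi k}{n-3}=\frac{1}{n-2}$ has an integer solution $k$, and rules this out by Niven's theorem on rational values of cosine at rational multiples of $\pi$; in your language this is precisely the verification that $\im C_n^+\neq0$. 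Your route is cleaner for the lemma as literally stated, but the paper's buys more: it shows that \emph{every} real $a$ works, and the argument immediately following the lemma relies on the resulting symmetry of $\Gamma$ about the real axis (and on the real axis lying in $\Gamma$) before perturbing $a$ slightly off the real line.
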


\begin{proof} 
The asymptotic behavior stated in the first property follows from the leading order term for $f(z)$.

Intersection of the smooth curves in $\Gamma$ can occur only at a critical point.  The critical points that are not $a$ are given by the roots of the equation
\begin{equation}\label{eq:crit}
\begin{split}0&=f'(z)/(z-a)^{n-3}
\\&=	(n-2)\bigg(z^2+(n-2)\,a\,z + \frac{(n-2)(n-1)}{2}a^2 \bigg)+\big(2z+(n-2)\,a\big)(z-a)
	\\ &=n\, z^2+(n^2-3n) \,a \,z+ \frac{(n-2)n(n-3)}{2}a^2.
	\end{split}
\end{equation}
When ${\rm Im}\,a=0$ (i.e. $a$ is purely real) the above two critical points are not on the real axis and complex conjugate to each other because the discriminant is negative, i.e.
\begin{equation}
	(n^2-3n)^2-2n^2(n-2)(n-3)<0.
\end{equation}
These two critical points, say $\zeta_\pm$ with $\overline\zeta_+=\zeta_-$, are given by 
\begin{equation}
  	\zeta_\pm = \left(-\frac{n-3}{2}\pm \ii\sqrt{\frac{(n-3)(n-1)}{4}}\right)\,a.
  \end{equation}
For purely real $a$, they are either both on $\Gamma$ or none on $\Gamma$ by symmetry.   To find out the correct case, one may calculate the argument of
\begin{equation}
  	f(\zeta_+)=(\zeta_+-a)^{n-2} \big(\zeta_++(n-2)a\big)
\end{equation}  
where the second factor is obtained by simplifying $P(\zeta_+)$ using the equation \eqref{eq:crit} for the critical points.  It is given by
\begin{equation}\begin{split}
\arg f(\zeta_+)&=(n-2) \arctan\left(\frac{\sqrt{(n-3)(n-1)}}{-n+1}\right) +\arctan	\left(\frac{\sqrt{(n-3)(n-1)}}{n-1}\right)
\\&=(n-3) \arctan\left(\frac{\sqrt{(n-3)(n-1)}}{-n+1}\right).\end{split}
\end{equation}
If $f(\zeta_+)$ is purely real then one should have
\begin{equation}\label{eq:tan}\tan \frac{\pi k}{n-3} = \frac{\sqrt{(n-3)(n-1)}}{-n+1}
\end{equation}
for some integer $k$. 

Writing this equation using sine, we have
$$\sin^2 \frac{\pi k}{n-3} = \frac{n-3}{2n-4}.$$
By the double angle identity
\begin{equation}\label{eq:Niven}
  \cos \frac{2 \pi k}{n-3} = 1 - 2\sin^2 \frac{\pi k}{n-3} = \frac{1}{n-2} .
\end{equation}
Let $x = \frac{2 \pi k}{n-3} $,
and suppose (\ref{eq:Niven}) has an integer solution $k$.
Then $x / \pi$ and $\cos x$ are both rational, so Niven's theorem \cite[Corollary 3.12]{Niven} applies
stating that $x = 0, \pm 1/2, \pm 1$.
It is easy to see that none of these possibilities can be realized in (\ref{eq:Niven}).

This shows that, when $a$ is purely real, the only critical points on $\Gamma$ occur at $a$,
but a continuous perturbation of $a$ to the complex plane preserves this property, 
since both the critical points and $\Gamma$ move continuously over the variation of $a$.

\end{proof}

\begin{figure}[ht]
\begin{center}
\includegraphics[width=0.4\textwidth]{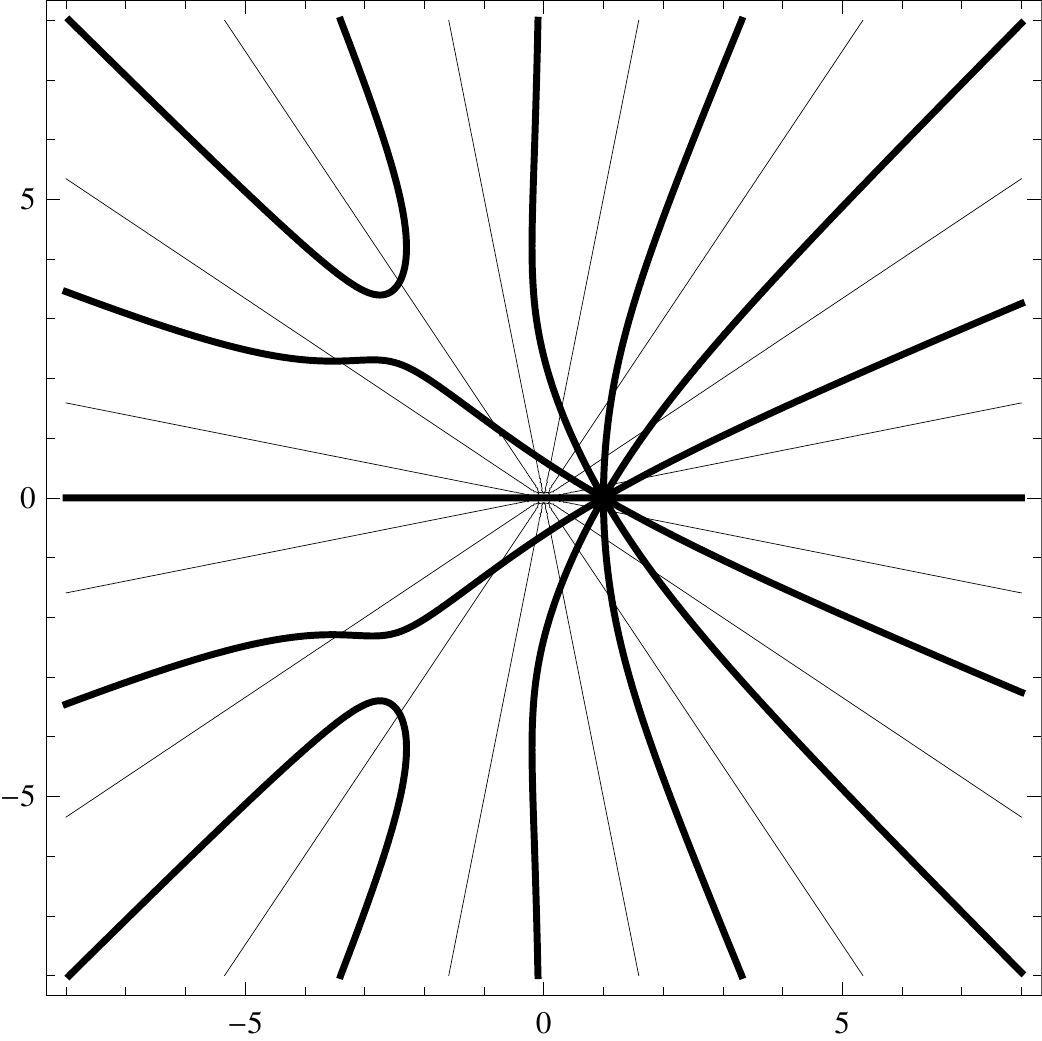}
\includegraphics[width=0.4\textwidth]{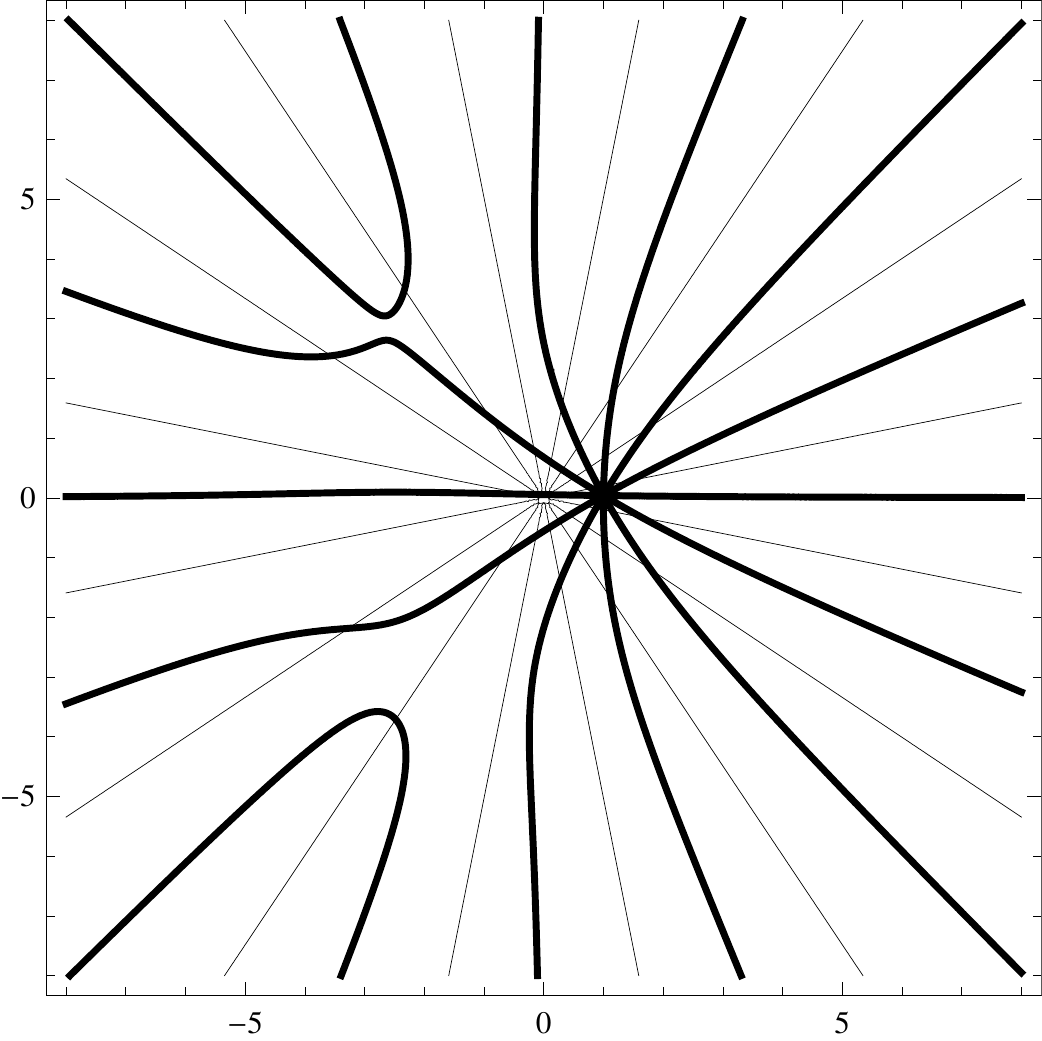}
\caption{\label{fig:n8} $n=8$. The left is for $a=1$; the right is for $a=1+ 0.04\ii$}
\end{center}
\end{figure}
From Lemma \ref{lem:level} one easily observes that, for $a$ purely real,
\begin{itemize}
\item $\Gamma$ has the symmetry with respect to the real axis;
\item the real axis belongs to $\Gamma$;
\item there are exactly two (symmetric) smooth curves that do not go through the point $a$ and they start and end at the consecutive sectors, i.e. $\infty\times\ee^{\ii\pi j/n}$ and $\infty\times\ee^{\ii\pi (j+1)/n}$ for some $j$. 
\end{itemize}
The resulting configuration is in the left picture of Figure \ref{fig:n8}.  
There are two lines that do not pass $a$.  Let us name them each by $\Gamma_+$ and $\Gamma_-$. Each line must contain exactly one root of $f$ (because $\Gamma$ are steepest descent lines of ${\rm Re} \,f$ and ${\rm Re}\,f$ is monotonic over $\Gamma_\pm$), therefore $\Gamma_\pm$ must pass through the two roots of $f$ that are not $a$; let us denote the two roots by
\begin{equation}
	z_\pm=-a \bigg(\frac{n-2}{2} \mp \ii \frac{\sqrt{n^2-2n}}{2}\bigg).
\end{equation}
The following fact shows how these two lines, $\Gamma_\pm$, behave near $\infty$, namely, how many lines pass in between $\Gamma_\pm$.  (For example, in Figure \ref{fig:n8} there are $3$ lines that pass in between $\Gamma_\pm$.) 
\begin{lemma}\label{lem:largest} We say that a smooth curve ``passes between $z_-$ and $z_+$'' if the curve intersects the straight path from $z_-$ to $z_+$ (the straight path does not include the points $z_-$ and $z_+$) exactly an odd
number of times without any tangential intersection.
Let $K$ be the number of smooth curves in $\Gamma$ that pass between $z_+$ and $z_-$. Then $K$ satisfies
\begin{equation}\label{eq:K}
K\geq 2\,\bigg\lfloor\frac{n-2}{\pi}\arctan \frac{\sqrt{n^2-2n}}{n}\bigg\rfloor-1.
\end{equation}
\end{lemma}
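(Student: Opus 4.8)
The plan is to convert the count $K$ into a count of transversal intersections of the open segment $S=(z_-,z_+)$ with $\Gamma$, and to control these through the winding of $f$ along $S$. Since $a$ is real, $f$ has real coefficients, so $\Gamma$ is symmetric across the real axis, the real axis lies in $\Gamma$, and $S$ is the vertical segment with midpoint $z_0=-a(n-2)/2$ on the real axis. It therefore suffices to count transversal intersections on the upper half $(z_0,z_+)$, double this by symmetry, and add the single transversal intersection contributed at $z_0$ by the real axis (which meets the vertical segment orthogonally).

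The key observation I would exploit is that along $S$ the winding of $f$ is carried almost entirely by the factor $(z-a)^{n-2}$. Writing $P(z)=(z-z_+)(z-z_-)$ and parametrizing the open upper segment as $z=-a(n-2)/2+\ii y$ with $0<y<a\sqrt{n^2-2n}/2$, the differences $z-z_+$ and $z-z_-$ are purely imaginary with opposite signs, so $P(z)$ is real and strictly positive on all of $(z_0,z_+)$. Hence on this segment $\arg f=(n-2)\arg(z-a)$ exactly, and the intersections with $\Gamma$ are precisely the points where $(n-2)\arg(z-a)\equiv 0\pmod{\pi}$. Using $z_+-a=-\tfrac{a}{2}\big(n-\ii\sqrt{n^2-2n}\big)$ one checks that $\arg(z-a)$ decreases strictly from $\pi$ at $z_0$ to $\pi-\arctan\frac{\sqrt{n^2-2n}}{n}$ at $z_+$. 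Consequently $\arg f$ decreases strictly by $(n-2)\arctan\frac{\sqrt{n^2-2n}}{n}$, so it crosses exactly $\big\lfloor\frac{n-2}{\pi}\arctan\frac{\sqrt{n^2-2n}}{n}\big\rfloor$ multiples of $\pi$ on $(z_0,z_+)$; strict monotonicity guarantees each crossing is a simple zero of $\im f$ at which $\re f\neq 0$, i.e.\ a transversal intersection with $\Gamma$ and no tangency.

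Combining the two symmetric halves with the crossing at $z_0$ produces at least $2\big\lfloor\frac{n-2}{\pi}\arctan\frac{\sqrt{n^2-2n}}{n}\big\rfloor+1$ transversal intersections of $S$ with $\Gamma$; discarding up to two of them to be safe yields \eqref{eq:K}. To finish one must pass from a count of intersection points to a count of distinct smooth curves meeting $S$ an odd number of times. Here I would argue that distinct intersection points lie on distinct lines: since each line of $\Gamma$ is a steepest-descent line of $\re f$, the function $\re f$ is strictly monotone along it, so two intersection points on a single line would force $f$ to vanish on the connecting arc, which is impossible away from the zeros $a,z_\pm$ of $f$.

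The hard part will be precisely this last, topological bookkeeping step. The winding computation is clean thanks to the fortuitous simplification $P>0$ on $S$, but turning the intersection count into a genuine lower bound for $K$ requires ruling out (or absorbing) the possibility that one curve meets $S$ several times near the endpoints $z_\pm$, where $\Gamma_\pm$ and the steepest-descent structure interact and where the monotonicity argument is most delicate. The conservative loss of two intersections already built into \eqref{eq:K} is exactly what lets us sidestep a fully detailed analysis of these endpoint curves, in keeping with the deliberately modest estimate recorded in the statement.
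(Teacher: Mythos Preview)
Your approach is essentially the paper's: compute the change in $\arg f$ along the vertical segment $S$, observe that the contribution of $P$ is trivial (you sharpen this to $P>0$ on $S$, which is correct and cleaner than the paper's ``variation of the second term is zero''), and read off $\big\lfloor\frac{n-2}{\pi}\arctan\frac{\sqrt{n^2-2n}}{n}\big\rfloor$ crossings on each half, plus the real axis at $z_0$.

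The one genuine gap is your argument that distinct intersection points must lie on distinct lines. Monotonicity of $\re f$ along a line of $\Gamma$ does \emph{not} prevent that line from crossing $S$ twice: you would only get a contradiction if $\re f$ took the same value at both crossing points, and nothing forces that. So this argument does not stand as written. The paper does not attempt any such general claim; instead it identifies the specific obstruction: the curve $\Gamma_+$ through $z_+$ may re-enter the open half-segment $(z_0,z_+)$ (and in fact does at $n=19$), contributing an intersection that does not correspond to a new line ``passing between'' $z_\pm$. That is the reason for subtracting $1$ per half. Since you already build in the same $-2$ cushion and explicitly flag the endpoint bookkeeping as the delicate step, your final inequality is correct; just drop the faulty monotonicity justification and replace it with the paper's observation that the only curve which can over-count on the upper half is $\Gamma_+$, because every other line in $\Gamma$ passes through $a$ (Lemma~\ref{lem:level}) and hence has its topology relative to $S$ pinned down.
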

\begin{proof}
Let us consider the variation of 
\begin{equation}
	\arg f(z) =	{\rm Im} \Big[(n-2) \log (z-a)\Big] + {\rm Im}\Bigg[\log \bigg(z^2+(n-2)\,a\,z + \frac{(n-2)(n-1)}{2}a^2 \bigg)\Bigg].
\end{equation}
The variation of the second term is zero when measuring along the straight path that connects $z_+$ and $z_-$.  So we have
\begin{equation}
\begin{split}
 \Delta\arg f= 2(n-2)(\pi-\arg(z_--a))&=  2(n-2)\bigg(\pi-\arg \frac{a}{2}\Big(-n +\ii \sqrt{n^2-2n}\Big)\bigg)
  \\&=2(n-2)\arctan \frac{\sqrt{n^2-2n}}{n},
  \end{split}
\end{equation}
where $\Delta\arg f$ is the angular variation of $f$ from $z_+$ to $z_-$.  

Let $z_0$ be the intersection of straight path between $z_-$ and $z_+$ with the real axis.
The number of smooth curves in $\Gamma$ that pass between $z_0$ and $z_+$ is at least 
\begin{equation}
 \bigg\lfloor\frac{\Delta \arg f}{2\pi} \bigg\rfloor-1.
\end{equation}  
The minus 1 above comes because there can be an intersection of the vertical path between $z_0$ and $z_+$ with $\Gamma_+$ at some point that is not $z_+$.  (This in fact occurs at $n=19$.)  

Counting the real axis, the total number of smooth curves in $\Gamma$ that pass between $z_+$ and $z_-$ is at least
\begin{equation}
  	1+2\,\left(\bigg\lfloor\frac{\Delta \arg f}{2\pi} \bigg\rfloor-1\right)=2\,\bigg\lfloor\frac{\Delta \arg f}{2\pi} \bigg\rfloor-1,
\end{equation}  
and this proves our lemma.
\end{proof}

Now let us consider a small continuous perturbation of $a$, see the right picture in Figure \ref{fig:n8}.
For ${\rm Im}\,a$ close to zero, the above $K$ is an odd number (by the symmetry under $x$ axis).   By a small perturbation of $a$, $\Gamma$ no longer contains the origin.  

Finally, we count the intersection points between $\Gamma$ and $\Gamma_0=\{z\,|\,{\rm Re}\,z^n=0\}$.  
Let us call each smooth curve that starts at $a$ and escapes to infinity a ``ray''.   
We count the intersections of each ray with $\Gamma_0$.

Lemma \ref{lem:largest} says that there is a ray that escapes to $\infty\times(-\ee^{\pi\ii j /n})$ for all $j$ in $-n< j\leq n$ except for
\begin{equation}
  j=\frac{K-1}2+1\quad \text{and}\quad   j=\frac{K-1}2+2.
\end{equation}
The ray that starts at $a$ and escapes to $\infty\times(-\ee^{\pi\ii j /n})$ intersects the set $\Gamma_0$ at least
$n-j$ times.  Therefore the total number of intersections is given by the sum of intersections for all rays:
\begin{equation}
  \bigg(n+ 2\sum_{j=1}^{n-1} (n-j)\bigg) - 2\bigg(n-\frac{K-1}{2}-1\bigg)-2\bigg(n-\frac{K-1}{2}-2\bigg)=n^2 - 4n+2K+4.
\end{equation}
Using Lemma \ref{lem:largest} the number of roots to \eqref{master} is given at least by
\begin{equation}
\begin{split}
n^2-4n+2K+4
\geq n^2-4n+4\,\bigg\lfloor\frac{n-2}{\pi}\arctan \frac{\sqrt{n^2-2n}}{n}\bigg\rfloor+2.
\end{split}
\end{equation}
This proves Theorem \ref{thm:counter}.

\section{The higher dimensional case $d>2$}\label{sec:gen}

With $x \in \RR^d$, let 
$$F(x_1,x_2,..,x_d) = \langle h_1(x), h_2(x),..,h_d(x) \rangle$$ be a vector field in $\RR^d$ 
having harmonic polynomial components $h_k$ so that the largest degree appearing is $n$.
Let $F = F_n + F_L$ be a decomposition of $F$ into a vector field $F_n$ containing the leading homogeneous terms and $F_L$ containing the lower order terms.
Using this set up, Wilmshurst's theorem can be stated in a form that is free of complex variable notation.

\begin{thm}\label{thm:W}
Let $d=2$. For a harmonic polynomial vector field $F(x)=F_n(x) + F_L(x)$, with $F_n$ the vector field of leading degree terms, if $F_n$ does not vanish on the unit circle $S^{1}$ 
then $N_F$ is finite and $N_F \leq n^2$.
\end{thm}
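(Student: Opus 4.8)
The plan is to reduce the statement to the classical Bezout bound for plane curves, the only real work being to show that the two coordinate curves $\{h_1=0\}$ and $\{h_2=0\}$ share no common component. Write $h_k = h_{k,n} + (\text{lower order})$, where $h_{k,n}$ is the degree-$n$ homogeneous part, so that $F_n=(h_{1,n},h_{2,n})$. Since each $h_k$ is harmonic and the Laplacian shifts degree by $-2$, every homogeneous part of $h_k$ is separately harmonic; in particular $h_{k,n}$ is a homogeneous harmonic polynomial of degree $n$ in two variables. The key structural fact I would record first is that such a form, when nonzero, is a scalar multiple of $\re(e^{\ii\phi}z^n)=r^n\cos(n\theta-\phi)$ and therefore factors over $\RR$ into $n$ pairwise non-proportional real linear forms (its zero set is exactly $n$ distinct lines through the origin).

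First I would extract two consequences of the hypothesis that $F_n\neq 0$ on $S^1$. Because a nonzero harmonic form of degree $n\ge 1$ always vanishes at $2n$ points of $S^1$, the hypothesis forces both $h_{1,n}$ and $h_{2,n}$ to be nonzero, hence $\deg h_1=\deg h_2=n$. Moreover, $F_n$ vanishing nowhere on $S^1$ means $h_{1,n}$ and $h_{2,n}$ have no common zero direction, which---given that each is a product of distinct real linear forms---is equivalent to saying $h_{1,n}$ and $h_{2,n}$ share no common real linear factor.

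The central step, and the one I expect to be the main obstacle, is deducing from this that $h_1$ and $h_2$ have no common polynomial factor. I would argue by contraposition: if $g$ were a nonconstant common factor, write $h_k=g\,w_k$; comparing top-degree forms and using $\deg h_k=n$ gives $h_{k,n}=g_{\mathrm{top}}\,w_{k,\mathrm{top}}$, so the leading form $g_{\mathrm{top}}$ (homogeneous of degree $\ge 1$) divides both $h_{1,n}$ and $h_{2,n}$. By unique factorization and the splitting established above, $g_{\mathrm{top}}$ is a product of real linear forms dividing $h_{1,n}$, so it contains a real linear factor $\ell$; then $\ell$ divides $h_{2,n}$ as well, producing a common real linear factor and hence a common zero of $F_n$ on $S^1$, a contradiction. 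This is where harmonicity is essential: for a general field the leading forms could share an irreducible real quadratic factor (such as $x^2+y^2$) with no real zeros, so a common component need not be visible on $S^1$.

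Finally, with no common factor over $\RR$---and hence none over $\CC$, since the complex factors of a real polynomial occur in conjugate pairs whose products are real---the affine system $h_1=h_2=0$ has finitely many complex solutions, at most $(\deg h_1)(\deg h_2)=n^2$ by Bezout. As the real zeros of $F$ form a subset of these, $N_F$ is finite and $N_F\le n^2$. I would note that this argument is organized precisely so that only the factorization of the leading form must be replaced when passing to $d>2$, which yields the weaker codimension-two conclusion stated in the introduction.
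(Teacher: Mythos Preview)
Your argument is correct but follows a different route from the paper's. You give essentially Wilmshurst's original proof recast in real-variable form: the two-dimensional fact that a nonzero degree-$n$ homogeneous harmonic splits into $n$ distinct real linear forms forces any common factor of $h_1,h_2$ to show up on $S^1$ through its leading part, and once there is no common component Bezout finishes. The paper instead deduces Theorem~\ref{thm:W} as the $d=2$ case of a topological argument designed to generalize (Theorem~\ref{thm:gen} and Corollary~\ref{cor:gen}): the hypothesis on $F_n$ makes $\{F=0\}$ bounded, and then a bounded algebraic set cut out by harmonic polynomials cannot have codimension one, because such a piece would---via Alexander duality after compactifying $\RR^d$ to $S^d$---enclose a bounded complementary region, on which the maximum principle applied to a component $h_i$ not identically zero there yields a contradiction. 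For $d=2$ ``codimension at least two'' means finitely many points, and Bezout gives $n^2$. Your approach is cleaner and entirely elementary in the plane; the paper's buys the higher-dimensional codimension-two statement at the price of semialgebraic triangulations and Alexander duality. One caveat: your closing remark that the outline extends to $d>2$ by ``replacing the factorization of the leading form'' is too optimistic---harmonic forms in more variables admit no such linear splitting, and the paper's substitute is precisely the maximum-principle/duality argument, not any algebraic factorization.
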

This immediately implies the theorem as stated in the introduction,
since the leading part of the harmonic field 
$$F(x,y) = p(z) + \ol{q(z)} = \left( \re \{ p(x+iy) + q(x+iy) \}, \im \{ p(x+iy) - q(x+iy) \} \right),$$
is, for some nonzero constant $c_n$,
$$F_n(x,y) = c_n \left( \re \{ (x+iy)^n \}, \im \{ (x+iy)^n \} \right),$$
and this does not vanish on the unit circle $S^1$.

This naturally leads to the question of whether Theorem \ref{thm:counter} is true in $d>2$ dimensions.
We give a generalization showing that the zero set of $F$ has codimension at least two (which implies $N_F$ is finite when $d=2$).
Examples with $d>2$ and codimension exactly two are described below.

First we recall a definition of \emph{dimension} for an algebraic subset of $\mathbb{R}^d$ 
(in fact the same definition applies to the wider class of semialgebraic subsets). 
The definition we give is taken from \cite{BCR} and is as close as possible to the reader's intuition.

Recall that given a compact semialgebraic subset $S\subset \mathbb{R}^d$ there exists a semialgebraic triangulation $\phi:|K|\to S$, 
where $|K|$ is the total space of a simplicial complex $K\subset \mathbb{R}^n$ and $\phi$ is a continuous semialgebraic map 
(see Theorem 9.2.1 from \cite{BCR})

We define the dimension of $S$ to be the dimension of $K$: it is the maximum over the dimension of the simplices in $K.$

In fact it is even possible to stratify $S=\coprod_{\alpha=1}^s S_\alpha$ in such a way that each stratum $S_\alpha$ is a smooth manifold and the dimension of $S$ equals $\max\{\dim (S_\alpha)\}$, 
where $\dim S_\alpha$ is the dimension as a smooth manifold (still referring to \cite{BCR}, 
Proposition 9.1.8 gives the stratification and Theorem 9.1. ensures the definitions of dimension agree).

\begin{thm}\label{thm:gen}
Suppose a real algebraic set $X \subset \RR^d$ is defined by harmonic polynomials.
If $X$ is bounded then the codimension of $X$ in $\RR^d$ is at least two.
\end{thm}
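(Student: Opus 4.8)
The plan is to argue by contradiction, assuming $\mathrm{codim}_{\RR^d} X \le 1$, and to extract from a would-be codimension-one piece of $X$ a \emph{bounded} open region on whose boundary all the defining harmonic polynomials vanish; the maximum principle will then force these polynomials to vanish on the region itself, which is absurd. Write $X=\{x\in\RR^d : h_1(x)=\cdots=h_r(x)=0\}$ with each $h_i$ a harmonic polynomial. First I dispose of the case $\dim X = d$: then $X$ has nonempty interior, every $h_i$ vanishes on an open set, and since harmonic polynomials are real-analytic this forces $h_i\equiv 0$ for all $i$, whence $X=\RR^d$, contradicting boundedness. So it suffices to rule out $\dim X = d-1$.

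The core analytic input is the following. If $\Omega\subset\RR^d$ is a \emph{bounded} open set with $\partial\Omega\subseteq X$, then each $h_i$ is harmonic on $\Omega$, continuous on $\ol\Omega$, and vanishes on $\partial\Omega$; the maximum principle gives $\max_{\ol\Omega} h_i=\min_{\ol\Omega} h_i=0$, so $h_i\equiv 0$ on $\Omega$ for every $i$, i.e.\ $\Omega\subseteq X$. Since $\Omega$ is disjoint from $X$ by construction, this is the desired contradiction. Thus the whole theorem reduces to the topological claim that a bounded real algebraic set of dimension $d-1$ \emph{separates} $\RR^d$, producing a bounded component $\Omega$ of $\RR^d\setminus X$ (whose boundary automatically lies in $X$, as $X$ is closed).

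To produce $\Omega$ I will show that $X$ separates $\RR^d$. Using the semialgebraic stratification recalled above, the top stratum $X_{d-1}$ is a smooth $(d-1)$-manifold and the remaining strata have dimension at most $d-2$; consequently $X$ behaves as a mod-two cycle, in the sense that a generic loop in $\RR^d$ meets $X$ only in finitely many transverse points of $X_{d-1}$ and the parity of this count is a homotopy invariant, hence $0$ for every null-homotopic loop. Pick $p\in X_{d-1}$ and a short segment through $p$ transverse to $X_{d-1}$, with endpoints $a,b$ on the two local sides of $X$ and with $a,b\notin X$; shrinking the segment, it meets $X$ only at $p$. If $a$ and $b$ lay in the same component of $\RR^d\setminus X$, joining them by a path avoiding $X$ (in general position) and closing up with the segment would give a null-homotopic loop meeting $X$ transversally exactly once, contradicting the parity invariance. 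Hence $a$ and $b$ lie in different components, so $X$ separates $\RR^d$; since $X$ is bounded the complement has a unique unbounded component (here $d\ge 2$), and therefore at least one bounded component $\Omega$, completing the reduction.

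The main obstacle is this separation step: its validity rests on $X$ carrying a well-defined mod-two intersection theory, which in turn uses that the singular locus of a real algebraic set has codimension at least one inside $X$—precisely where algebraicity, via the stratification of \cite{BCR}, enters, and where a merely smooth or semialgebraic hypersurface-with-boundary would fail. Everything else is routine: the dimension-$d$ case is immediate from real-analyticity, and once a bounded $\Omega$ is in hand the maximum principle closes the argument in one line.
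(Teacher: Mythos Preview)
Your overall strategy is exactly the paper's: assume $\dim X=d-1$, produce a bounded component of $\RR^d\setminus X$, and invoke the maximum principle to reach a contradiction. The only substantive difference is in how separation is proved. The paper passes to $S^d$, uses Alexander duality
\[
\tilde H_0(S^d\setminus X;\mathbb{Z}_2)\cong H_{d-1}(X;\mathbb{Z}_2),
\]
and then cites \cite[Prop.~11.1.1]{BCR}: in a triangulation of a \emph{compact real algebraic} set the formal sum of the top-dimensional simplices is a $\mathbb{Z}_2$-cycle, so $H_{d-1}(X;\mathbb{Z}_2)\neq 0$ and the complement has at least two components.

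Your intersection-parity argument is the geometric counterpart of this, but the justification you give for it is not sufficient. Knowing only that the lower strata of $X$ have dimension $\le d-2$ does \emph{not} force the mod-two intersection number of a loop with $X$ to be a homotopy invariant. For instance, a flat closed disk $D^{d-1}\subset\RR^d$ is stratified with ``singular locus'' $\partial D^{d-1}\cong S^{d-2}$ of codimension one in the disk, yet a small loop linking $S^{d-2}$ meets the open disk transversally exactly once while being null-homotopic in $\RR^d$; so parity is not preserved. What your argument actually needs is that $X$ is a mod-two $(d-1)$-cycle --- equivalently, that an even number of $(d-1)$-simplices meet along every $(d-2)$-simplex of a triangulation. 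This is a genuine structural fact about compact real algebraic sets (Sullivan's local Euler-obstruction parity, recorded as \cite[Prop.~11.1.1]{BCR}), not a consequence of the stratification alone; it is precisely the input the paper invokes. Once you cite it, your transversality argument and the paper's Alexander-duality argument become two packagings of the same ingredient. So: right architecture, right instinct about where algebraicity enters, but the sentence ``which in turn uses that the singular locus \dots\ has codimension at least one'' should be replaced by an appeal to the mod-two cycle property of compact real algebraic sets.
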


\begin{cor}[Generalization of Wilmshurst's theorem]
\label{cor:gen}
For a harmonic polynomial vector field $F(x)=F_n(x) + F_L(x)$ as above, if $F_n$ does not vanish on the unit sphere $S^{d-1}$ 
then the zero set $\{x \in \RR^d: F(x) = 0 \}$ is of codimension at least two.
\end{cor}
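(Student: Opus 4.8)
The plan is to obtain Corollary~\ref{cor:gen} from Theorem~\ref{thm:gen} by verifying its single hypothesis. The zero set in question is
$Z=\{x\in\RR^d: F(x)=0\}=\bigcap_{k=1}^{d}\{x: h_k(x)=0\}$,
which is a real algebraic set \emph{defined by the harmonic polynomials} $h_1,\dots,h_d$ (the components of $F$). Thus the hypotheses of Theorem~\ref{thm:gen} are met as soon as $Z$ is bounded, and the codimension conclusion is then immediate; the whole task reduces to proving boundedness.

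To prove $Z$ is bounded I would argue that $F$ is coercive. Write each component as $h_k=h_{k,n}+(\text{lower-order terms})$, so that $F_n=\langle h_{1,n},\dots,h_{d,n}\rangle$ is homogeneous of degree $n$. Since $F_n$ is continuous and, by hypothesis, nowhere zero on the compact sphere $S^{d-1}$, the constant $c:=\min_{S^{d-1}}|F_n|$ is strictly positive, and homogeneity gives $|F_n(x)|=|x|^{n}\,|F_n(x/|x|)|\ge c\,|x|^{n}$. The remainder $F_L$ has degree at most $n-1$, so $|F_L(x)|\le C(1+|x|^{n-1})$ for some constant $C$. Hence $|F(x)|\ge c|x|^{n}-C(1+|x|^{n-1})\to\infty$ as $|x|\to\infty$, so there is an $R$ with $F(x)\neq 0$ whenever $|x|>R$. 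Therefore $Z\subseteq \overline{B_R}$ is bounded, and Theorem~\ref{thm:gen} applies to give that $Z$ has codimension at least two.

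The only genuinely nontrivial input is Theorem~\ref{thm:gen} itself, whose proof I would organize around the maximum principle. The crux, and the step I expect to be the main obstacle, is to show that a bounded codimension-one piece of $X$ must enclose a bounded region. For an arbitrary bounded $(d-1)$-dimensional set this is false (a flat disk does not separate $\RR^d$), so one must exploit that $X$ is \emph{algebraic}: if $\dim X=d-1$, a top-dimensional stratum has an irreducible $(d-1)$-dimensional Zariski closure $Z(g)$, and since $g$ divides every defining polynomial $h_j$ one gets $Z(g)\subseteq X$, hence $Z(g)$ is bounded and cannot simply ``stop'' at a smooth edge. Because $g$ is then nonzero and of one sign outside a large ball while changing sign across a smooth real point of $Z(g)$, the complement of $Z(g)$ has a bounded component $U$ with $\partial U\subseteq Z(g)\subseteq X\subseteq\{h_j=0\}$. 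Applying the maximum principle to a defining harmonic polynomial $h_j\not\equiv 0$ on $U$ forces $h_j\equiv 0$ on $U$, hence on all of $\RR^d$, a contradiction. Establishing the sign-change and separation properties of $Z(g)$ from real algebraic geometry is where the real work lies.
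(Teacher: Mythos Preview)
Your deduction of the corollary from Theorem~\ref{thm:gen} is exactly the paper's: both show that nonvanishing of $F_n$ on $S^{d-1}$ forces the zero set to be bounded via the same homogeneity estimate $|F(x)|\ge |x|^{n}\min_{S^{d-1}}|F_n|-|x|^{n-1}\max_{S^{d-1}}|F_L|$, and then invoke Theorem~\ref{thm:gen}.

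Where you and the paper diverge is in the sketch you give for Theorem~\ref{thm:gen}. Both arguments finish with the maximum principle once one knows that a bounded $(d-1)$-dimensional real algebraic set $X$ must enclose a bounded component of $\RR^d\setminus X$, but the paper handles this enclosure step purely topologically: compactify to $S^d$, apply Alexander duality $\tilde H_0(S^d\setminus X;\mathbb Z_2)\cong H_{d-1}(X;\mathbb Z_2)$, and use the fact (Proposition~11.1.1 of \cite{BCR}) that in any semialgebraic triangulation of a compact real algebraic set the $\mathbb Z_2$-sum of the top-dimensional simplices is a cycle which cannot bound, so $H_{d-1}(X;\mathbb Z_2)\neq 0$ whenever $\dim X=d-1$ and hence $S^d\setminus X$ is disconnected. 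Your route through an irreducible hypersurface factor $g$ and a sign-change argument is workable in outline, but over $\RR$ the step ``$Z(g)\subseteq\{h_j=0\}$ implies $g\mid h_j$'' and the sign change across $Z(g)$ both rest on $Z_\RR(g)$ having a smooth real point (which is indeed guaranteed by $\dim Z_\RR(g)=d-1$, and forces $g$ to be absolutely irreducible so that the real locus is Zariski dense in $Z_\CC(g)$). The Alexander-duality argument buys a clean homological bypass of exactly this real-algebraic bookkeeping you correctly flagged as ``where the real work lies.''
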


For $d=2$ this reduces to Theorem \ref{thm:W}.

\begin{proof}[Proof of Corollary \ref{cor:gen}]
The assumption on $F_n$ implies that $\{x \in \RR^d : F(x) =0 \}$ is bounded so that Theorem \ref{thm:gen} applies.
Indeed, for all $x \in \RR^d$ with $|x|$ large enough,
$$|F(x)| \geq |F_n(x)| - |F_L(x)| \geq |x|^n \min_{\theta \in S^{d-1}}|F_n(\theta)| - |x|^{n-1}\max_{\theta \in S^{d-1}}|F_L(\theta)| > 0.$$
\end{proof}

\begin{proof}[Proof of Theorem \ref{thm:gen}]
Suppose $X=\{F=0\}$ has a component of codimension one, i.e. $\dim(X)=d-1$. We will prove that this implies:
\begin{equation}\label{c}\mathbb{R}^d\backslash X \quad \textrm{has at least one bounded component $A,$}\end{equation} and this will give an absurdity. In fact, assuming (\ref{c}), pick a component $h=h_i$ of $F=(h_1, \ldots, h_d)$ that doesn't vanish identically on $A$. By possibly replacing $h$ with $-h$, we can assume that $h\geq 0$ on $\textrm{Clos}(A)$ and since $\textrm{Clos}(A)$ is compact ($A$ is bounded), then $h|_{\textrm{Clos}(A)}$ has a maximum $M=h(x_0)>0$, $x_0\in A$. The Maximum Principle for harmonic functions implies that $h$ is constant on $A$, against the assumption that it vanishes on a point $x\in X$ that is also a limit point of $A.$

In order to prove that $\dim (X)=d-1$ implies (\ref{c}) we proceed as follows.
First we consider the \emph{one point compactification} $S^d=\mathbb{R}^d\cup \{\infty\}$ of $\mathbb{R}^d$. Since $X$ is closed and bounded, then the number of connected components of $S^d\backslash X$ is the same of $\mathbb{R}^d\backslash X$; moreover the existence of at least two components of $S^d\backslash X$ implies one of them does not contain $\{\infty\}$ and this component will also be a \emph{bounded} component of $\mathbb{R}^d\backslash X.$ 

To compute the number of connected components of $S^d\backslash X$, recall that this number equals $b_0(S^d\backslash X)=\textrm{rk} H_0(S^d\backslash X; \mathbb{Z}_2)$ (the zero-th homology group with coefficients in the field $\mathbb{Z}_2$; the choice of $\mathbb{Z}_2$ coefficients does not change the rank of $H_0$ and will be useful for the study of the homology of the \emph{real} algebraic set $X$). Moreover, since $X$ is compact, Alexander duality (see \cite{Hatcher}, Theorem 3.44) implies:
$$\tilde{H}_0(S^d\backslash X; \mathbb{Z}_2)\simeq H^{d-1}(X;\mathbb{Z}_2)\simeq H_{d-1}(X;\mathbb{Z}_2)$$
where the first group is the \emph{reduced} zero-th homology group, whose rank equals $b_0(S^d\backslash X)-1$, and the last isomorphism comes from the fact that  homology and cohomology with coefficients in a field are isomorphic.

Let us now triangulate $X$ as for the definition of its dimension (see above). Proposition 11.1.1 from \cite{BCR} says that the sum of the simplices of dimension $d-1$ with coefficients in $\mathbb{Z}_2$ is a cycle in $X$; this cycle cannot be a boundary, since there are no $d$-dimensional simplices. Hence $H_{d-1}(X)\neq 0$
and the conclusion follows from $$b_0(S^d\backslash X)=\textrm{rk}\tilde{H}_0(S^d\backslash X;\mathbb{Z}_2)+1=\textrm{rk}H_{d-1}(X;\mathbb{Z}_2)+1\geq 2.$$
\end{proof}

In this setting, given that ``codimension two'' is equivalent to ``finite'' when $d=2$
yet becomes a weaker conclusion in higher dimensions, it is natural to ask whether
it is possible to show when $d>2$ the stronger conclusion that $N_F$ is finite in Corollary \ref{cor:gen}.
The next example shows that ``codimension two'' cannot be improved.
For simplicity, we describe the example for $d=3$
but a similar construction works in higher dimensions to exhibit a harmonic field satisfying the hypothesis of Corollary \ref{cor:gen}
and having zero set of codimension exactly two.

\smallskip

\noindent {\bf Example:}
For simplicity of notation we consider a vector field in $\RR^3$.
Take
$$F(x,y,z) = \langle u(x,y,z), v(x,y,z), w(x,y,z) \rangle,$$
with the following harmonic polynomials as components
$$u(x,y,z) = xy(6z^2-x^2-y^2),$$
$$v(x,y,z) = (x^2-y^2)(6z^2-x^2-y^2), $$
$$w(x,y,z) = 35(z-1)^4-30(z-1)^2(x^2+y^2+(z-1)^2) + 3(x^2+y^2+(z-1)^2)^2 .$$

The first two are spherical harmonics of fourth degree, and the latter is a spherical harmonic of the same degree, shifted along the $z$-axis.
The leading part $F_4$ of the field consists of $\langle u(x,y,z),v(x,y,z),w_4(x,y,z) \rangle$,
where 
$$w_4(x,y,z) = 35z^4-30z^2(x^2+y^2+z^2) + 3(x^2+y^2+z^2)^2,$$
is the homogeneous fourth-degree part of $w$.

\medskip

\noindent {\bf Claim 1:} The hypothesis of Corollary \ref{cor:gen} is satisfied.

Note that $u$, $v$, and $w_4$ are three elements from the standard basis for spherical harmonics of degree four.
Consider the nodal lines of $u$, $v$, and $w_4$ on the sphere (which are well-studied). 
We will see that there are no points in $S^2$ common to all three.
We first notice that $u$ and $v$ have no meridional lines in common; 
$xy=0$ only intersects $x^2-y^2=0$ at the North and South poles,
where $w_4$ is non-vanishing.
It remains to check that the horizontal nodal lines, which are identical for $u$ and $v$, do not intersect those of $w_4$.
With respect to the angle $\theta$ from the North pole, $w_4$ restricted to the sphere is a constant times $P_4(\cos(\theta))$, where $P_4$ is a Legendre polynomial, 
whereas the zeros with respect to $\theta$ of $u$ and $v$ are given by those of $P_{4,2}(cos(\theta))$, with $P_{4,2}$ the associated Legendre polynomial.
The zeros of these polynomials are well known and none are in common.

\medskip

\noindent {\bf Claim 2:} The zero set of $F$ has codimension exactly two.

The zero set of $u$ consists of a cone $C$ with vertex at the origin along with a pair of orthogonal planes containing the $z$-axis.
The zero set of $v$ has the same description except the pair of orthogonal planes is different (but the cone $C$ is the same).
The zero set of $w$ is a set of two cones $C_1$, $C_2$ each with vertex at the point $(0,0,1)$.
The slopes of $C$, $C_1$, and $C_2$ are all distinct.
This is a consequence of the fact used 
above that $P_4$ and $P_{4,2}$ don't share any zeros.
Indeed, the zeros of $P_4$ determine the slopes of the cones in the zero set of $w$ while the zeros of $P_{4,2}$ determine the slopes for $u$ and $v$.
It follows that each cone of $\{w=0\}$ intersects the cone $C = \{u=v=0\}$ in two circles,
as is the case for any intersection of two cones with common axis but different vertices and different slopes.

\begin{figure}[h]
    \begin{center}
    \includegraphics[scale=.25]{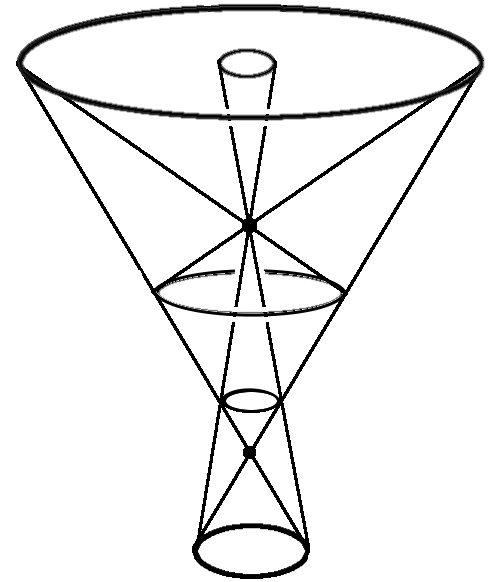}
    \end{center}
    \caption{The zero set of $F$ includes four circles where $C$ intersects the union of $C_1$ and $C_2$.}
    \label{fig:cones}
\end{figure}

\bigskip

This example shows that Bezout's theorem cannot be applied when $d>2$.
However, it is still possible to give an upper bound on the number $b_0(F)$ of \emph{connected components} of $F$.
The bound due to J. Milnor \cite{Milnor} for the total Betti number of an intersection
in particular gives an upper bound on the number of connected components (zeroth Betti number).
Applying this to our situation gives
$$b_0(F) \leq n(2n-1)^{d-1}.$$

We note that in the generic case it is possible to apply the Bezout bound.

\begin{prop}\label{prop:generic}
For a generic harmonic polynomial vector field $F(x)$, 
the number of zeros is finite and bounded by
the product of the degrees of its component functions.
\end{prop}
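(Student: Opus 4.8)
The plan is to reduce the statement to the classical affine Bezout bound over $\CC$ and to establish the needed genericity by an incidence-variety dimension count performed \emph{inside} the linear subspace of harmonic polynomials. Write $F=\langle h_1,\dots,h_d\rangle$ with $\deg h_k=d_k$, and let $g_k$ denote the leading homogeneous form of $h_k$. Every real common zero of the $h_k$ is in particular a complex common zero, so it suffices to bound the isolated points of the complex affine variety $\{h_1=\cdots=h_d=0\}\subset\CC^d$. The affine form of Bezout's theorem gives that this variety is finite, with at most $\prod_k d_k$ points counted with multiplicity, as soon as the leading forms $g_1,\dots,g_d$ have no common zero on the hyperplane at infinity, i.e. no common zero in $\CP^{d-1}$: this condition forces the projective closure of the zero set to avoid infinity, hence to be a finite subset of the affine chart. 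Thus the whole proposition follows once I show that, for a generic harmonic field, the leading forms have no common projective zero.

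First I would set up the parameter space. For each $k$ let $\mathcal{H}_{d_k}$ be the finite-dimensional complex vector space of homogeneous harmonic forms of degree $d_k$, and put $V=\prod_{k=1}^{d}\mathcal{H}_{d_k}$. Consider the incidence variety
\begin{equation*}
Z=\left\{\,(g_1,\dots,g_d,[v])\in V\times\CP^{d-1}\ :\ g_k(v)=0\ \text{for all }k\,\right\}.
\end{equation*}
The fiber of the projection $Z\to\CP^{d-1}$ over $[v]$ is the linear subspace of $V$ cut out by the $d$ evaluation conditions $g_k(v)=0$. Provided each such condition is a genuine nonzero linear constraint, this fiber has codimension exactly $d$, so $Z$ is a projective bundle over $\CP^{d-1}$ with $\dim Z=\dim V-d+(d-1)=\dim V-1$. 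The image of the projection $Z\to V$ then has dimension at most $\dim V-1$, hence is a proper subvariety, and its complement is a Zariski-open dense set of tuples whose leading forms have no common projective zero.

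The main obstacle is exactly the hypothesis just used: that for every $[v]\in\CP^{d-1}$ and every $k$ the evaluation $g\mapsto g(v)$ is not identically zero on $\mathcal{H}_{d_k}$. I would verify this using the explicit forms $(a\cdot x)^{d_k}$ with $a\in\CC^d$ isotropic, i.e. $a\cdot a=0$; these are harmonic, since $\Delta (a\cdot x)^{m}=m(m-1)(a\cdot a)(a\cdot x)^{m-2}$, and it is classical that they span $\mathcal{H}_{d_k}$. If $g(v)=0$ for all $g\in\mathcal{H}_{d_k}$, then in particular $(a\cdot v)^{d_k}=0$, hence $a\cdot v=0$, for every isotropic $a$; since the isotropic vectors span $\CC^d$ for $d\ge 2$ and the bilinear form $\sum_j x_j^2$ is nondegenerate, this forces $v=0$, contradicting $[v]\in\CP^{d-1}$. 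Hence the evaluation is nonzero at every nonzero $v$, the fiber codimension is exactly $d$, and the dimension count of the previous paragraph is justified.

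Finally I would assemble the pieces and pass from complex to real genericity. The bad locus ``the leading forms have a common projective zero'' is the vanishing set of a multivariate resultant with real coefficients; having shown its complexification is a proper subvariety of $V$, it meets the real parameter space $\prod_k\mathcal{H}_{d_k}^{\RR}$ (which is Zariski-dense in $V$) in a proper real algebraic subset. For a harmonic field $F$ outside this set the complex affine zero set is finite and bounded by $\prod_k d_k$ by Bezout, and the real zero set is contained in it, giving $N_F\le\prod_k d_k$. Note that finiteness depends only on the leading coefficients, so this is an open condition on all of $F$; if one additionally wants every zero to be simple, one more Zariski-open condition (nonvanishing of the Jacobian on the zero set) can be imposed, though it is not needed for the stated bound.
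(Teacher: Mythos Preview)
Your argument is correct and takes a genuinely different route from the paper's. The paper embeds the space $V$ of harmonic fields into the ambient space $W$ of all polynomial fields, observes that the discriminant $\Delta\subset W$ of ``degenerate'' systems is a proper semialgebraic set, and then reduces the question of whether $V\cap\overline{\Delta}$ is proper in $V$ to exhibiting \emph{one} regular harmonic field, which is left as an exercise. You instead work intrinsically in the harmonic parameter space and run an incidence-variety dimension count over $\CP^{d-1}$ to show directly that the ``bad at infinity'' locus has dimension at most $\dim V-1$. The only nontrivial input you need is that the evaluation functional $g\mapsto g(v)$ is nonzero on each $\mathcal{H}_{d_k}$ for every $v\neq 0$, and your verification via isotropic powers $(a\cdot x)^{d_k}$ is clean; in fact you do not even need that these span $\mathcal{H}_{d_k}$, only that for each nonzero $v$ some isotropic $a$ has $a\cdot v\neq 0$, which follows since isotropic vectors span $\CC^d$ for $d\ge 2$.

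What your approach buys is a self-contained proof with an explicit Zariski-open genericity condition (no common projective zero of the leading harmonic forms), whereas the paper's approach, while conceptually simple, defers the essential content to the unproved exercise. Conversely, the paper's framework is slightly more flexible in that its discriminant could in principle encode finer degeneracies (e.g.\ non-transverse affine zeros), though this is not needed for the stated bound. Your passage from complex to real genericity is also fine: the bad locus is cut out by the restriction to $V$ of the classical multipolynomial resultant, a real polynomial in the coefficients, so properness over $\CC$ forces properness over $\RR$.
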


\begin{proof}
Let $F=(h_1, \ldots, h_d)$ with each $h_i$ of degree $n_i$; if we denote by $H_{l, k}$ the space of real harmonic polynomials of degree $l$ in $k$ variables, in such a way that $H_{l,k}\subset \mathbb{R}[x_1, \ldots,x_k]_l=P_{l,k}$, then we have:
$$F\in H_{n_1, d}\oplus\cdots\oplus H_{n_d, d}=V.$$
In this way $V$ is a finite dimensional subspace of $W=P_{n_1, d}\oplus\cdots P_{n_d, d}.$ An element of $W$ can be considered as a ``system of polynomial equations'', in the sense that given $P=(P_1, \ldots, P_d)\in W$ we can write the system $\{P_1=\cdots=P_d=0\}$. It is well known that for the generic $P\in W$ the above system has a finite number of solutions (being the number of variables equal to the number of equations), and such number is bounded by (this is simply Bezout's theorem):
$$N_P\leq n_1\cdots n_d=N_W$$
Thus, given $F\in V\subset W$, we can perturb it to a $\tilde{F}$ with finitely many zeros, but we do not know (yet) that we can do this perturbation without leaving $V.$ To show that this is indeed possible, consider the ``discriminant'' $\Delta\subset W$:
$$\Delta=\{P\in W\,|\, \textrm{ the system of equations associated to $P$ is degenerate}\}.$$
We claim that $\Delta$ is a semialgebraic set of dimension: $$\dim (\Delta)\leq \dim (W)-1.$$
Semialgebraicity is clear: it is the projection on the third factor of the semialgebraic set $S=\{(x, \eta, P)\in \mathbb{R}^d\times (\mathbb{R}^{d})^*\backslash \{0\} \times W\, |\, P(x)=0, \, d_xP=0\}$ (projections of semialgebraic sets are semialgebraic, see \cite{BCR}). 
The fact that $\Delta$ has codimension one (according to the above definition) is equivalent to the the fact that it doesn't contain any open subsets, which is clear from the genericity of regular systems. 
Consider now the \emph{Zariski} closure $\overline{\Delta}$ of $\Delta$ in $W$: Proposition 2.8.2 of \cite{BCR} implies $\overline{\Delta}$ has the same dimension as $\Delta$, hence $\overline{\Delta}$ is a \emph{proper} algebraic set. 
Notice that if $P\notin \overline{\Delta}$, then $P$ is regular.

Finally consider the intersection $V\cap \overline{\Delta}$: it is an algebraic set and, if proper, its complement will be an \emph{open dense} set of regular elements of $V$. To show that $V\cap \overline{\Delta}$ is proper, it is enough to exhibit \emph{one} regular $F\in V$: then for every $P$ in a neighborhood $U$ of $F$ in $W$ such $P$ will be regular; hence $U\cap V$ will be a nonempty open set of regular elements. 
The existence of a regular $F$ is left as an exercise.
\end{proof}

\section{Appendix}\label{sec:appendix}

Suppose $F(z) = p(z) + \ol{q(z)}$ 
is free of singular zeros 
(zeros where the Jacobian of $F$, $|p'(z)|^2 - |q'(z)|^2$ vanish).
Let $N_+$ count the orientation-preserving zeros and $N_-$ the orientation-reversing zeros of $F$.
Suppose $\Omega$ is a domain with smooth boundary $\p \Omega$ without zeros on $\p \Omega$.
The argument principle for harmonic functions \cite{DHL} states that the winding number $\Ind_{\partial \Omega} F(z)$ around the boundary of a domain $\Omega$
counts the number of orientation preserving zeros inside $\Omega$ minus the number of orientation reversing zeros.
Thus, for a large enough circle $C$:
$$ \Ind_{C} F(z) = N_+ - N_-.$$
If $\deg p = n > m = \deg q$ then $ \Ind_{C} F(z) = n$ for $C$ large enough, since the $z^n$ term dominates.
Thus, $N_+ = N_- + n$.

In the introduction, we conjectured the bound $N_- \leq m(n-1)$ which is linear in $n$.
(The number $m(n-1)$ is the degree of $q$ times the maximum possible number of components of the set where $F$ reverses orientation.)
According to the above, the conjecture implies that $N_+ \leq m(n-1) + n$, and thus $ N_F = N_+ + N_- \leq  2m(n-1) + n.$


\begin{thebibliography}{}

\bibitem{BCR} J. Bochnak, M. Coste, M-F. Roy, \emph{Real Algebraic Geometry}, Springer-Verlag, 1998. 

\bibitem{BHS1995}
D. Bshouty, W. Hengartner, and T. Suez, \emph{The exact bound on the number of zeros of harmonic polynomials}, J. Anal. Math., 67 (1995), 207-218.

\bibitem{BL2004}
D. Bshouty, A. Lyzzaik, \emph{On Crofoot-Sarason's conjecture for harmonic polynomials}, Comp. Meth. Funct. Thy.,
4 (2004), 35-41.

\bibitem{BL2010}
D. Bshouty, A. Lyzzaik, \emph{Problems and conjectures for planar harmonic mappings: in
the Proceedings of the ICM2010 Satellite Conference: International Workshop on Harmonic and Quasiconformal Mappings (HQM2010)},
Special issue in: J. Analysis, 18 (2010), 69-82.

\bibitem{DHL}
P. Duren, W. Hengartner, R. S. Langesen, \emph{The argument principle for harmonic functions}, Amer. Math. Monthly, 103 (1996), 411-415.

\bibitem{G2008}
L. Geyer, \emph{Sharp bounds for the valence of certain harmonic polynomials}, Proc. AMS, 136 (2008), 549-555.

\bibitem{Hatcher} A. Hatcher, \emph{Algebraic Topology}, Cambridge University Press, 2002.

\bibitem{K-S}
D. Khavinson, G. Swiatek, \emph{On a maximal number of zeros of certain harmonic polynomials}, Proc. AMS, 131 (2003), 409-414.

\bibitem{K-N}
D. Khavinson, G. Neumann, \emph{On the number of zeros of certain rational harmonic functions}, Proc. AMS, 134 (2006), 1077-1085.

\bibitem{K-N2}
D. Khavinson, G. Neumann, \emph{From the fundamental theorem of algebra to astrophysics: a harmonious path}, Notices AMS, 55 (2008), 666-675.

\bibitem{LiWei2009}
W. V. Li, A. Wei (2009), \emph{On the expected number of zeros of random harmonic polynomials}, Proc. AMS, 137 (2009), 195-204.

\bibitem{LL2012}
A. Lerario, E. Lundberg, \emph{Statistics on Hilbert's sixteenth problem}, preprint (2012), arXiv:1212.3823.

\bibitem{Milnor}
J. Milnor, \emph{On the Betti numbers of real varieties}, Proc. AMS, 15 (1964), 275-280.

\bibitem{NazSod2009}
F. Nazarov, M. Sodin, \emph{On the Number of Nodal Domains of Random Spherical Harmonics}, Amer. J. Math., 131 (2009), 1337-1357.

\bibitem{Niven}
I. Niven, \emph{Irrational Numbers}, Wiley, 1956.

\bibitem{PSch1996}
R. Peretz, J. Schmid, \emph{On the zero sets of certain complex polynomials}, Proceedings of the Ashkelon Workshop on Complex Function Theory (1996), 203-208,
Israel Math. Conf. Proc. 11, Bar-Ilan Univ. Ramat Gan, 1997.

\bibitem{Sh2002}
T. Sheil-Small, \emph{Complex Polynomials}, Cambridge University Press, 2002.

\bibitem{Wilm94}
A. S. Wilmshurst, \emph{Complex harmonic polynomials and the valence of harmonic polynomials}, D. Phil. thesis, Univ. of York, U.K., 1994.

\bibitem{Wilm98}
A. S. Wilmshurst, \emph{The valence of harmonic polynomials}, Proc. AMS 126 (1998), 2077-2081.



\end{thebibliography}
\end{document}